\definecolor{webgreen}{rgb}{0,.5,0}
\definecolor{webbrown}{rgb}{.6,0,0}
\newcommand{\seqnum}[1]{\href{https://oeis.org/#1}{\rm \underline{#1}}}
\def\modd#1 #2{#1\ \mbox{\rm (mod}\ #2\mbox{\rm )}}
\begin{document}


\theoremstyle{plain}
\newtheorem{theorem}{Theorem}
\newtheorem{corollary}[theorem]{Corollary}
\newtheorem{lemma}[theorem]{Lemma}
\newtheorem{proposition}[theorem]{Proposition}

\theoremstyle{definition}
\newtheorem{definition}[theorem]{Definition}
\newtheorem{example}[theorem]{Example}
\newtheorem{conjecture}[theorem]{Conjecture}

\theoremstyle{remark}
\newtheorem{remark}[theorem]{Remark}

\begin{center}
\vskip 1cm{\LARGE\bf 
Efficient Calculation the Number of Partitions of the Set $\{1, 2, \ldots, 3n\}$ into Subsets $\{x, y, z\}$ Satisfying $x+y=z$
}
\vskip 1cm
\large
Christian Hercher\\
Institut f\"{u}r Mathematik\\
Europa-Universit\"{a}t Flensburg\\
Auf dem Campus 1b\\
24944 Flensburg\\
Germany \\
\href{mailto:christian.hercher@uni-flensburg.de}{\tt christian.hercher@uni-flensburg.de} \\
\vskip 1cm
Frank Niedermeyer\\
Bonn\\
Germany\\
\href{mailto:F.Niedermeyer@pipin11.de}{\tt F.Niedermeyer@pipin11.de}
\end{center}

\vskip .2 in

\begin{abstract}
Consider the set $\{1,2,\ldots,3n\}$. We are interested in the number of partitions of this set into subsets of three elements each, where the sum of two of them equals the third.

We give some criteria such a partition has to fulfill, which can be used for efficient pruning in the search for these partitions. In particular, we enumerate all such partitions for $n=16$ and $n=17$ adding new terms to the series \seqnum{A108235} in the Online Encyclopedia of Integer Sequences.
\end{abstract}

\section{Introduction}
The enumeration of set partitions with different constraints as pattern avoiding is a long-studied field of problems; see Mansour~\cite{Mansour} for an overview. 

We are interested in partitions of the set $\{1,\ldots,3n\}$ for a positive integer~$n$ into subsets of size~3 each with the additional property that in every such subset the sum of the two smaller elements gives the third one. 

This problem has wide connections to cyclic Steiner triple systems, modified Nim games or arranging colored blocks; see Nowakowski~\cite{Nowakowski} for an overview. 

E.g., Nickerson asks in \cite{Nickerson} for an arrangement of two copies of all integers $i\in\{1,\ldots,2n\}$ such that the two copies of $i$ are separated by exactly $i-1$ other terms. Thus, $11423243$ is a Nickerson sequence for $n=4$. If $a_1, a_2, \ldots, a_{2n}$ is such a Nickerson sequence, then let $p_1(i)$ be the index of the first occurrence of the number~$i$ and $p_2(i)$ the index of the second one. Then the sets $\{p_1(i),p_2(i)\}$ are partitioning the set $\{1,\ldots,2n\}$, and each difference $p_2(i)-p_1(i)=i$ occurs exactly once. Thus, the sets $\{i,p_1(i)+n,p_2(i)+n\}$ are a partition of $\{1,\ldots,3n\}$. Hence, every solution to Nickerson's problem leads to such a partition. (Note that the revers is not true.)

But the task of finding the number of such partitions is also a good riddle; see e.g.~\cite{MP}. That was the context in which the authors of this paper had first contact with this problem.

It is well-known that such a partition can only exist if $n\equiv \modd{0} {4}$ or $n \equiv \modd{1} {4}$. The proof of this statement is straightforward: Suppose we have a partition of $\{1,\ldots,3n\}$ into such subsets $\{x_i,y_i,z_i\}$ with $x_i+y_i=z_i$. Then, clearly, we have 
\[\sum_i (x_i+y_i)=\sum_i z_i=\frac{1}{2} \sum_{k=1}^{3n} k=\frac{3n(3n+1)}{4}.\]
So $4\mid 3n(3n+1)$. Since $3n$ and $3n-1$ are relatively prime it follows $4\mid 3n$ or $4\mid 3n+1$, and hence, we get $n\equiv \modd{0} {4}$ or $n\equiv \modd{1} {4}$, $\Box$.

If a positive integer $n$ is of the necessary form, then such partitions of $\{1,\ldots,3n\}$ always exist. Examples of general constructions are given by Nowakowski~\cite{Nowakowski}.

The question of how many such partitions for a given $n$ exist is not so easy to answer. In the next sections, we give tools for an efficient computation of these numbers.

\section{Key Ideas}
In the following, we want to use a recursive approach to search for such partitions: Find one subset $\{x,y,z\}$ with $x+y=z$, eliminate those numbers and search for the next one in the remaining set until all numbers are partitioned. This can be done as in the naive recursive Algorithm~\ref{Algo_0}.

\begin{algorithm}[H]
 \SetAlgoLined
 \SetKwProg{Fn}{Function}{:}{end}
  \KwIn{A nonempty set $S=\{b_1,b_2,\ldots,b_{3m}\}\subseteq\{1,\ldots,3n\}$ with $b_1<b_2<\cdots<b_{3m}$}
  \KwOut{The number of partitions $P=\{\{x_1,y_1,z_1\},\ldots,\{x_m,y_m,z_m\}\}$ of $S$ into pairwise disjoint subsets $\{x_i,y_i,z_i\}$ satisfying $x_i+y_i=z_i$ for all $1\leq i\leq m$}
  \Fn{Counting\_Partitions($S$)}
  {
    \uIf(\tcp*[f]{End of Recursion}){$m=1$}
  {
    \uIf{$b_1+b_2=b_3$}{\Return{1}}\uElse{\Return{0}}
  }
  \BlankLine
  $number\_of\_partitions \leftarrow 0$\;
  \BlankLine
    \For{$i=2$ \KwTo $3m-1$}
    {
      \uIf(\tcp*[f]{found a triple $(x,y,z)=(b_1,b_i,b_k)$}){there is a $i<k\leq 3m$ with $b_1+b_i=b_k$}
      {
      $S^{\prime}\leftarrow S\setminus \{b_1,b_i,b_k\}$\;
      $number\_of\_partitions += Counting\_Partitions(S^{\prime})$\;
      }
    }
    \Return{$number\_of\_partitions$}\;
  }
\caption{Naive Recursive Algorithm.}
\label{Algo_0}
\end{algorithm}

For a much faster calculation, we want to know as early as possible if a complete such partition is not possible anymore. The first and most used constraint is given in the next theorem.

\begin{theorem}\label{Bedingung_1}
Let $m$ be a positive integer, and $S:=\{b_1,b_2,\ldots,b_{3m}\}$ a set of positive integers with $b_1<b_2<\cdots<b_{3m}$ which can be partitioned into subsets $\{x_i,y_i,z_i\}$, $i=1, \ldots, m$ with $x_i+y_i=z_i$. Furthermore, let $S_1:=\sum_{k=1}^{2m} b_k$ and $S_2:=\sum_{k=2m+1}^{3m} b_k$. Then
\[S_1\leq S_2.\]
\end{theorem}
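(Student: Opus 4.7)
The plan is to exploit the fact that every triple has a distinguished "large" element $z_i$ and two "small" ones $x_i, y_i$. Write $L := \{z_1, \ldots, z_m\}$ and $T := S \setminus L = \{x_1, y_1, \ldots, x_m, y_m\}$, so that $|L| = m$ and $|T| = 2m$. The defining relation $x_i + y_i = z_i$, summed over all triples, gives
\[
\sum_{b \in T} b \;=\; \sum_{i=1}^m (x_i + y_i) \;=\; \sum_{i=1}^m z_i \;=\; \sum_{b \in L} b.
\]

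Next I would compare $T$ and $L$ with the extremal subsets defining $S_1$ and $S_2$. Let $A := \{b_1, \ldots, b_{2m}\}$ be the $2m$ smallest elements of $S$ and $B := \{b_{2m+1}, \ldots, b_{3m}\}$ the $m$ largest, so $S_1 = \sum A$ and $S_2 = \sum B$. Since $A$ minimizes the sum over all $2m$-subsets of $S$, we have $\sum A \le \sum T$; symmetrically, $B$ maximizes the sum over all $m$-subsets of $S$, giving $\sum B \ge \sum L$.

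Chaining these inequalities with the equality above yields
\[
S_1 \;=\; \sum_{b \in A} b \;\le\; \sum_{b \in T} b \;=\; \sum_{b \in L} b \;\le\; \sum_{b \in B} b \;=\; S_2,
\]
which is the desired bound. There is essentially no obstacle here; the only subtle point is recognizing that the equality $\sum T = \sum L$ (a global consequence of the triple relation) can be sandwiched between two extremal inequalities that compare arbitrary $2m$- and $m$-subsets with the specific ones determined by the sorted order of $S$.
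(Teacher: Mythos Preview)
Your proof is correct and is essentially the same argument as the paper's: both sandwich the equality $\sum_i (x_i+y_i)=\sum_i z_i$ between the extremal inequalities $\sum_{k=1}^{2m} b_k \le \sum_i (x_i+y_i)$ and $\sum_i z_i \le \sum_{k=2m+1}^{3m} b_k$. The paper simply writes this as a single chain without naming the sets $T$, $L$, $A$, $B$, but the content is identical.
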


\begin{proof}
 For a given set $S=\{b_1,b_2,\ldots,b_{3m}\}$ with  $b_1<b_2<\cdots<b_{3m}$ let $\bigcup_{i=1}^m \{x_i,y_i,z_i\}=S$ and $x_i+y_i=z_i$ for all $i=1,\ldots,m$. Then we have
  \[S_1 = \sum_{k=1}^{2m} b_k \leq \sum_{i=1}^m (x_i+y_i) = \sum_{i=1}^m z_i \leq \sum_{k=2m+1}^{3m} b_k = S_2.\] 
\end{proof}

This leads to the following basic Algorithm~\ref{Algo_1} for enumerating all such partitions of subsets of $\{1,\ldots,3n\}$.

\begin{algorithm}[H]
 \SetAlgoLined
 \SetKwProg{Fn}{Function}{:}{end}
  \KwIn{A nonempty set $S=\{b_1,b_2,\ldots,b_{3m}\}\subseteq\{1,\ldots,3n\}$ with $b_1<b_2<\cdots<b_{3m}$, the sums $S_1:=\sum_{i=1}^{2m} b_i$ and $S_2:=\sum_{i=2m+1}^{3m} b_i$}
  \KwOut{The number of partitions $P=\{\{x_1,y_1,z_1\},\ldots,\{x_m,y_m,z_m\}\}$ of $S$ into pairwise disjoint subsets $\{x_i,y_i,z_i\}$ satisfying $x_i+y_i=z_i$ for all $1\leq i\leq m$}
  \Fn{Counting\_Partitions($S$, $S_1$, $S_2$)}
  {
    \uIf(\tcp*[f]{End of Recursion}){$m=1$}
  {
    \uIf{$S_1=S_2$}{\Return{1}}\uElse{\Return{0}}
  }
  \BlankLine
  \uIf(\tcp*[f]{Condition of Theorem~\ref{Bedingung_1} does not hold}){$S_1>S_2$}{\Return{0}}
  \BlankLine
  $number\_of\_partitions \leftarrow 0$\;
  \BlankLine
    \For{$i=2$ \KwTo $3m-1$}
    {
      \uIf(\tcp*[f]{found a triple $(x,y,z)=(b_1,b_i,b_k)$}){there is a $i<k\leq 3m$ with $b_1+b_i=b_k$}
      {
      $S^{\prime}\leftarrow S\setminus \{b_1,b_i,b_k\}$\;
      \uIf{$i\geq 2m+1$}
      {
        $S_1^{\prime}\leftarrow S_1-b_1-b_{2m}$\;
        $S_2^{\prime}\leftarrow S_2-b_i-b_k+b_{2m}$\;
      }
      \uElseIf{$k\geq 2m+1$}
      {
        $S_1^{\prime}\leftarrow S_1-b_1-b_i$\;
        $S_2^{\prime}\leftarrow S_2-b_k$\;
      }
      \uElse
      {
        $S_1^{\prime}\leftarrow S_1-b_1-b_i-b_k+b_{2m+1}$\;
        $S_2^{\prime}\leftarrow S_2-b_{2m+1}$\;
      }
      \BlankLine
      
      $number\_of\_partitions += Counting\_Partitions(S^{\prime},S_1^{\prime},S_2^{\prime})$\;
      }
    }
    \Return{$number\_of\_partitions$}\;
  }
\caption{Basic Algorithm.}
\label{Algo_1}
\end{algorithm}

Furthermore, we have some extra conditions such a partition must have:

\begin{theorem}\label{Thm_2}
In the same setting as in Theorem~\ref{Bedingung_1} the following conditions hold:
\begin{enumerate}
 \item \label{Bedingung_2} If and only if $S_1=S_2$, then $\{x_i \mid 1\leq i\leq n\} \cup \{y_i \mid 1\leq i\leq n\} = \{b_1, b_2, \ldots, b_{2n}\}$ and $\{z_i \mid 1\leq i\leq n\} = \{b_{2n+1}, b_{2n+2}, \ldots, b_{3n}\}$.
 \item \label{Bedingung_3} It is $b_1+b_{2n}\leq b_{3n}$.
\item \label{Bedingung_4} If equality holds in \ref{Bedingung_3}, then $\{x_i, y_i, z_i\}=\{b_1, b_{2n}, b_{3n}\}$ for some $i$ and $S_1=S_2$.
\item \label{Bedingung_5} If $b_1+b_{2n+1}>b_{3n}$, then $S_1=S_2$.  
\end{enumerate}
\end{theorem}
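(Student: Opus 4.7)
The plan is to prove all four parts by a single uniform technique, based on a one-step ``partner bound''. For part~\ref{Bedingung_2}, I would start from the inequality chain already used in the proof of Theorem~\ref{Bedingung_1}:
\[
S_1 = \sum_{k=1}^{2m} b_k \;\leq\; \sum_{i=1}^m (x_i+y_i) \;=\; \sum_{i=1}^m z_i \;\leq\; \sum_{k=2m+1}^{3m} b_k = S_2.
\]
Here $\sum_i(x_i+y_i)$ is a sum of $2m$ distinct elements of $S$ and is minimised only when those elements are the $2m$ smallest, namely $\{b_1,\ldots,b_{2m}\}$; analogously $\sum_i z_i$ is maximised only by $\{b_{2m+1},\ldots,b_{3m}\}$. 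Hence $S_1=S_2$ is equivalent to simultaneous equality in both inequalities, which is exactly the stated set identifications, and the converse is immediate.

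For parts~\ref{Bedingung_3}--\ref{Bedingung_5} I would rely on the following observation. Suppose $b_k$ is a summand (an $x$ or $y$) in its triple and let $b_l$ be the $z$-value of that triple, so $l>k$. Then the partner of $b_k$ equals $b_l-b_k\in S$, hence $b_l-b_k\geq b_1$. For $k\geq 2m$ one has $b_l-b_k\leq b_{3m}-b_{2m}$, and for $k\geq 2m+1$ one has $b_l-b_k\leq b_{3m}-b_{2m+1}$. Whenever one of these upper bounds drops below $b_1$, the partner cannot lie in $S$, contradicting the assumption that $b_k$ is a summand and forcing $b_k$ to be the $z$-value of its triple.

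Under the hypothesis of part~\ref{Bedingung_3}, $b_{3m}-b_{2m}<b_1$, so every $b_k$ with $k\geq 2m$ must be a $z$-value; this would produce $m+1$ distinct $z$-values across only $m$ triples, the desired contradiction. Under the hypothesis of part~\ref{Bedingung_5}, $b_{3m}-b_{2m+1}<b_1$, so every $b_k$ with $k\geq 2m+1$ is a $z$-value, filling all $m$ $z$-slots; the resulting identification $\{z_i\}=\{b_{2m+1},\ldots,b_{3m}\}$ is exactly the criterion from part~\ref{Bedingung_2}, so $S_1=S_2$.

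Part~\ref{Bedingung_4} combines the two bounds. Under the equality $b_1+b_{2m}=b_{3m}$ one still has $b_{3m}-b_{2m+1}<b_1$ strictly (because $b_{2m+1}>b_{2m}$), so $b_{2m+1},\ldots,b_{3m}$ must all be $z$-values, exhausting the $m$ $z$-slots; hence $b_{2m}$ is forced to be a summand. Its triple has some $z$-value $b_l$ with $l\geq 2m+1$, and its partner $b_l-b_{2m}$ is an element of $S$ that is at most $b_{3m}-b_{2m}=b_1$ and at least $b_1$, so the partner equals $b_1$ and $b_l=b_{3m}$: the triple containing $b_{3m}$ is exactly $\{b_1,b_{2m},b_{3m}\}$. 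Since $\{z_i\}=\{b_{2m+1},\ldots,b_{3m}\}$, part~\ref{Bedingung_2} yields $S_1=S_2$. The one delicate step in the whole theorem sits inside part~\ref{Bedingung_4}: one must first use the strict bound to fill the $z$-slots before concluding that $b_{2m}$ itself cannot be a $z$-value; after that, the exceptional triple is forced.
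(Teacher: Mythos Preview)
Your argument is correct. For item~\ref{Bedingung_2} it coincides with the paper's proof. For items~\ref{Bedingung_3}--\ref{Bedingung_5} you take a contrapositive reorganisation of the paper's reasoning: where the paper bounds the summands of each triple from above (for item~\ref{Bedingung_5} it shows directly that the larger summand $y_j$ satisfies $y_j<b_{2m+1}$ for every $j$; for item~\ref{Bedingung_3} it picks a triple with a summand $\ge b_{2m}$ and reads off $b_1+b_{2m}\le x_i+y_i=z_i\le b_{3m}$), you instead bound from below the partner of a would-be large summand and conclude that each large $b_k$ is forced to be a $z$-value, then count. The underlying inequality is the same, namely $z-\text{(summand)}\ge b_1$, read in the two directions. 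Your packaging has the virtue of handling the three items by one uniform mechanism, and your treatment of item~\ref{Bedingung_4} (first use the strict bound at index $2m{+}1$ to exhaust the $z$-slots, then force $b_{2m}$ to be a summand with partner exactly $b_1$) is arguably cleaner than the paper's, which reuses the specific index $i$ found in the proof of item~\ref{Bedingung_3}. The paper's direct proof of item~\ref{Bedingung_3}, on the other hand, is a one-liner and avoids the counting detour.

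One wording slip: item~\ref{Bedingung_3} is an unconditional assertion, so ``under the hypothesis of part~\ref{Bedingung_3}'' is misphrased; you mean ``assume for contradiction that $b_1+b_{2m}>b_{3m}$, i.e.\ $b_{3m}-b_{2m}<b_1$''. The mathematics that follows is fine.
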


\begin{proof}
We use the same notation as in the proof of Theorem~\ref{Bedingung_1}. 
\begin{enumerate}
\item If equality holds in Theorem~\ref{Bedingung_1}, then it must hold on all places in the chain of inequalities given in the last item. If $S_1 = \sum_{i=1}^n (x_i+y_i)$, then $\{x_i, y_i \mid 1\leq i\leq n\} = \{b_1, b_2, \ldots, b_{2n}\}$. And if $\sum_{i=1}^n z_i = S_2$, we have $\{z_i \mid 1\leq i\leq n\} = \{b_{2n+1}, b_{2n+2}, \ldots, b_{3n}\}$. And if the sets are as described, equality holds in Theorem~\ref{Bedingung_1}.
\item Clearly, we have $x_i\geq b_{2n}$ or $y_i\geq b_{2n}$ for some $i$. W.l.o.g.\ let the second case be true. Then we have $b_1+b_{2n}\leq x_i+y_i=z_i\leq b_{3n}$.
\item If equality holds in Theorem~\ref{Bedingung_1}, then from the chain of inequalities in the previous item we get $x_i=b_1$, $y_i=b_{2n}$, and $z_i=b_{3n}$, which proves the first part of this claim. Furthermore, if there were an index $j$ with $x_j<y_j$ and $y_j>b_{2n}$, we would have $b_1+b_{2n}<x_j+y_j=z_j\leq b_{3n}$, contradicting the assumed equality in Theorem~\ref{Bedingung_1}. Therefore, all $x_i$ and $y_i$ are smaller than or equal to $b_{2n}$, which proves the second part of this claim.
\item Let $j\in\{1,2,\ldots,n\}$, and w.l.o.g.\ let $x_j<y_j$. If $b_1+b_{2n+1}>b_{3n}$, we have $b_{3n}\geq z_j=x_j+y_j\geq b_1+y_j > b_{3n}-b_{2n+1}+y_j$. Thus, it is $b_{2n+1}>y_j>x_j$, and we get equality in Theorem~\ref{Bedingung_1} with the second direction of the equivalence given in Condition~\ref{Bedingung_2}.
\end{enumerate} 
\end{proof}

This leads to some significant improvements of Algorithm~\ref{Algo_1}:

\begin{itemize}
 \item If at some point in the recursive search the equality $S_1=S_2$ holds, then any triple has to consist of two elements of $\{b_1,\ldots,b_{2m}\}$ and one of $\{b_{2m+1},\ldots,b_{3m}\}$. Thus, we have $2\leq i\leq 2m$ and $2m+1\leq k\leq 3m$, which narrows the search space for triples. Also, if this equality is reached at one point, it has to hold in all subsequent recursion steps. Therefore, in this case, the sums $S_1$ and $S_2$ need not be computed anymore. This will be done in a reduced version of the given function.
 \item If $b_1+b_i\geq b_k$, then $b_1+b_{i+1}>b_k$. Therefore, after the search for triples with $b_1$ and $b_i$ up to $b_k$, the search for triples with $b_1$ and $b_{i+1}$ can start with the next value after $b_k$.
 \item If for some $i$ one has $b_1+b_i>b_{3m}$, then one can end the search not only for this $i$ but also for all higher values of $i$ in the \texttt{for} loop. 
 \item If $b_1+b_i>b_{3m}$ for some $i\geq 2m+1$, then by Condition~\ref{Bedingung_5} of Theorem~\ref{Thm_2}, we can use the reduced function for further calculating, since we have $S_1=S_2$.
 \item If $b_1+b_{2m}=b_{3m}$, we have found one triple $\{b_1,b_{2m},b_{3m}\}$, and the next step of the recursion can be done with the reduced function, since we have $S_1=S_2$ in this case as well.
\end{itemize}

With this we improved the program by a fair bit; see Tables~\ref{Table_1} and \ref{Table_2} for comparison. Its source code is given in an additional document. 

With this we calculated the up to then unknown values for $n=16$ and $n=17$ listed in the series \seqnum{A108235} of the Online Encyclopedia of Integer Sequences. We get

\begin{align*}
a_{16} &= 142664107305\text{ and}\\
a_{17} &= 1836652173363.
\end{align*}

\section{Further Ideas}
Theorems~\ref{Bedingung_1} and \ref{Thm_2} as well as the algorithms we provided do not make use of the fact that the set $S$ to be partitioned is a subset of $\{1,\ldots,3n\}$. It could be any set of positive reals. Thus, we can use them to answer some slight variation of the given problem, too: 

Sedl\'{a}\v{c}ek~\cite{Sedlacek} asks if, for every $n\not\equiv \modd{6} {12}$ and $n\not\equiv \modd{9} {12}$ in the set $\{1,\ldots,n\}$, there are $\lfloor\tfrac{n}{3}\rfloor$ pairwise disjoint subset $\{x,y,z\}$ with $x+y=z$. (For $n\equiv \modd{6} {12}$ or $n\equiv \modd{9} {12}$ the parity argument made in the introduction proves that there cannot be $\tfrac{n}{3}$ such triples.) Guy~\cite{Guy} gives a general proof of existence for all remaining $n$ and asks how many different such maximal collections
of disjoint subsets with the desired property exist. The sequence \seqnum{A002849} in OEIS gives these numbers. (For $n\equiv \modd{6} {12}$ or $n\equiv \modd{9} {12}$ the maximal number of disjoint subsets only is $\tfrac{n}{3}-1$.)

By excluding the right quantity of numbers from $\{1,\ldots, n\}$ such that the remaining set has a multiple of~3 elements and their sum is divisible by~4, we construct subproblems which can be solved with our methods. In this way, we calculated the next hitherto unknown elements in this series:

\begin{align*}
 a_{43}&=16852166906 \text{ and}\\
 a_{44}&=270947059160.
\end{align*}

But, if we constrain our search to the task of partitioning the set $\{1,\ldots,3n\}$ (with $n\equiv \modd{0} {4}$ or $n\equiv \modd{1} {4}$ as given in the introduction), then we can use this as a meta information. Thus, we know that all elements $b_1,\ldots,b_{3m}$ of the considered set are positive integers and at most $3n$. Additionally, by our strategy of always putting the smallest available element in the next triple, we know $b_1\geq n-m+1$, since, previously, at least the smallest $n-m$ elements have already been chosen for the first $n-m$ triples. 

\begin{theorem}\label{Thm_3}
In the recursive search for partitions of $\{1,\ldots,3n\}$ into subsets $\{x,y,z\}$ satisfying $x+y=z$, where in every step the smallest available number is chosen, suppose that only be six elements remain. Thus, we have $m=2$. Let $S=\{b_1,\ldots,b_6\}$ with $b_1<\cdots<b_6$, $S_1=b_1+\cdots+b_4$, and $S_2=b_5+b_6$. If there is such a partition, then $S_1=S_2$.   
\end{theorem}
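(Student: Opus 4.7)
The plan is to reduce the statement to showing that in any valid partition of $\{b_1,\ldots,b_6\}$ into two triples $\{x_i,y_i,z_i\}$ with $x_i+y_i=z_i$, one has $\{z_1,z_2\}=\{b_5,b_6\}$. By Theorem~\ref{Thm_2}(\ref{Bedingung_2}) this is equivalent to $S_1=S_2$, so this reformulation is exactly what is needed.

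First I would observe that $b_6$, being the largest element, is forced to play the role of a $z_i$, say $z_2=b_6$. Assume for contradiction that $z_1\neq b_5$; then $z_1\in\{b_1,b_2,b_3,b_4\}$, so $z_1<b_5$. Hence $b_5$ cannot lie in triple~1 and must be paired with $b_6$ in triple~2. The third element of triple~2 is $b_6-b_5$, which is positive and strictly less than $b_5$, so it equals $b_j$ for some $j\in\{1,2,3,4\}$; the remaining three elements of $\{b_1,\ldots,b_4\}$ then form triple~1.

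At this point I would invoke the meta-information emphasised in the paragraph just before the theorem: every $b_i\leq 3n$, and the strategy of always consuming the current minimum forces $b_1\geq n-1$, which together with strict increasingness yields $b_i\geq n+i-2$ for each $i$. I would then handle the four subcases $j\in\{1,2,3,4\}$ separately. In each subcase the largest of the three remaining members of $\{b_1,\ldots,b_4\}$ must equal $z_1$, so the relation $x_1+y_1=z_1$ becomes a single explicit linear identity; substituting the lower bounds and using $b_5\geq b_4+1$ gives $b_6\geq 3n+1$ in the three subcases $j\in\{1,2,3\}$ and even $b_6\geq 4n+1$ in the subcase $j=4$ (where triple~1 is $\{b_1,b_2,b_3\}$ with $b_1+b_2=b_3$ and triple~2 is $\{b_4,b_5,b_6\}$ with $b_4+b_5=b_6$). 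In every case this contradicts $b_6\leq 3n$.

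The actual work reduces to four short linear estimates of the same flavour, so there is no serious obstacle beyond careful bookkeeping. The only conceptual point worth emphasising is that the strategy-based bound $b_1\geq n-1$ is indispensable: a generic six-element set admitting a partition into two summation triples need not satisfy $S_1=S_2$, so the theorem genuinely exploits the ambient constraint $b_i\in\{1,\ldots,3n\}$ rather than the mere existence of the triple relations.
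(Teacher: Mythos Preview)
Your proposal is correct and rests on the same core ingredient as the paper's proof: the meta-bounds $b_i\geq n+i-2$ coming from the ``always pick the minimum'' strategy, together with $b_6\leq 3n$, are exactly what forces each of the bad configurations to overflow. The four linear estimates you sketch all go through as stated.

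The organisation, however, is somewhat different. The paper cases on the two partners of $b_1$, yielding ten cases; six are eliminated (four by the overflow argument, two by a simple ordering observation) and the remaining four all visibly satisfy $S_1=S_2$. Your reduction---first forcing $z_2=b_6$, then assuming $z_1\neq b_5$ so that $b_5$ must sit in the $b_6$-triple---cuts straight to the four substantive cases $j\in\{1,2,3,4\}$ and bypasses the two order-based eliminations entirely (in your framework those correspond to $z_1=b_5$, where there is nothing to prove). The computations you perform match the paper's cases $b_1+b_5=b_6$, $b_1+b_3=b_4$, $b_1+b_2=b_4$, and $b_1+b_2=b_3$ one-for-one. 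So the mathematical content is identical; your decomposition is just a bit more economical, and your explicit invocation of Theorem~\ref{Thm_2}(\ref{Bedingung_2}) to equate ``$\{z_1,z_2\}=\{b_5,b_6\}$'' with ``$S_1=S_2$'' makes the target of the argument clearer from the outset.
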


\begin{proof}
Since $m=2$, we look at the second to last step in the recursion: Only six elements $b_1<\cdots<b_6$ are remaining; and we know $b_1+b_2+b_3+b_4\leq b_5+b_6$. From the previous paragraph, we additionally have $n-1\leq b_1$, $n\leq b_2$, $\ldots$ and $b_6\leq 3n$.

Now we can make an analysis by case which partitions are still possible. In the following, we distinguish the cases by naming the ``partners'' of $b_1$ in its triple. The other triple is then well-defined:

\begin{itemize}
 \item $b_1+b_2=b_3$: Then $b_3\geq (n-1)+n=2n-1$ and $b_4\geq 2n$. But this leads to the contradiction $3n\geq b_6=b_5+b_4 > 2n+2n=4n$.
 \item $b_1+b_2=b_4$: Then $b_4\geq 2n-1$, and $b_5\geq 2n$. This yields the contradiction $n+1\leq b_3\leq b_6-b_5\leq 3n-2n=n$.
 \item $b_1+b_2=b_5$: A possible solution.
 \item $b_1+b_2=b_6$: This cannot be the case, since $b_1+b_2<b_3+b_4=b_5<b_6$.
 \item $b_1+b_3=b_4$: Then $b_4\geq (n-1)+(n+1)=2n$ and $b_5\geq 2n+1$. Because of $n+1\leq b_2=b_6-b_5\leq 3n-(2n+1)=n-1$, this leads to a contradiction, too.
 \item $b_1+b_3=b_5$: A possible solution.
 \item $b_1+b_3=b_6$: This cannot be the case, since $b_1+b_3<b_2+b_4=b_5<b_6$.
 \item $b_1+b_4=b_5$: A possible solution.
 \item $b_1+b_4=b_6$: A possible solution.
 \item $b_1+b_5=b_6$: Then we have $b_4=b_2+b_3\geq n+(n+1)=2n+1$ and, therefore, $b_5\geq 2n+2$. This gives the contradiction $3n\geq b_6=b_1+b_5\geq (n-1)+(2n+2)=3n+1$.
\end{itemize}

In all four remaining cases, we have $b_1+b_2+b_3+b_4=b_5+b_6$, hence $S_1=S_2$.
\end{proof}

If this equality does not hold, we can discard the set directly. Otherwise, we only need to check the four possible solutions from above. This speeds up the process, a little bit further. 

A comparison of running times and number of recursive function calls of the different algorithms was done on a i9 processor of the 11th generation. The algorithms were not parallelized. So only one core was used. The different running times are given in Table~\ref{Table_1} and the number of recursive function calls in Table~\ref{Table_2}.

\begin{table}[htb]
\begin{center}
\begin{tabular}{l|r|r|r|r|r}
 & \multicolumn{5}{c}{Running Time for}\\
 & $n=8$ & $n=9$ & $n=12$ & $n=13$ & $n=16$\\
\hline
Number of Solutions & 3040 & 20505 & 10567748 & $103372655$ & $\approx 1.43 \cdot 10^{11}$\\
\hline\hline
Naive Algorithm~\ref{Algo_0} & 16 ms & 146 ms & 308 s & $>$24 h & --- \\
Basic Algorithm~\ref{Algo_1} & 2 ms & 18 ms & 19 s & 219 s & --- \\
Algorithm with Theorem~\ref{Thm_2} & 1 ms & 11 ms & 10 s & 110 s & 211658 s\\
Algorithm with Theorems~\ref{Thm_2} \& \ref{Thm_3} & 1 ms & 11 ms & 10 s & 107 s & 199610 s
\end{tabular}
\caption{Running Times of the different Algorithms.}
\label{Table_1}
\end{center}
\end{table}

\begin{table}[htb]
\begin{center}
\begin{tabular}{l|r|r|r|r|r}
 & \multicolumn{5}{c}{Number of recursive function calls for}\\
 & $n=8$ & $n=9$ & $n=12$ & $n=13$ & $n=16$\\
\hline
Naive Algorithm~\ref{Algo_0} & 435083 & 4567652 & $88.00\cdot 10^8$ & --- & --- \\
Basic Algorithm~\ref{Algo_1} & 49059 & 401092 & $3.77 \cdot 10^8$ & $4.36\cdot 10^9$ & --- \\
Algorithm with Theorem~\ref{Thm_2} & 39793 & 307826 & $2.49 \cdot 10^8$ & $2.73 \cdot 10^9$ & $5.432 \cdot 10^{12}$\\
Algorithm with Theorems~\ref{Thm_2} \& \ref{Thm_3} & 36103 & 287085 & $2.43 \cdot 10^8$ & $2.69 \cdot 10^9$ & $5.405 \cdot 10^{12}$
\end{tabular}
\caption{Number of recursive function calls of the different Algorithms.}
\label{Table_2}
\end{center}
\end{table}

\bigskip
\hrule
\bigskip

\noindent 
2020 \emph{Mathematics Subject Classification}:~Primary 05A18

\medskip

\noindent 
\emph{Keywords}: set partitions with arithmetic property 

\begin{thebibliography}{99}
\bibitem{Guy} R. K. Guy, Sedlacek's Conjecture on disjoint solutions of $x+y=z$, Dept. Math., Univ. Calgary, Research Paper No. 129, 1971. avaibale at \url{https://oeis.org/A002848/a002848.pdf}.
\bibitem{Mansour} T. Mansour, \textit{Combinatorics of set partitions}, CRC Press, 2012.
\bibitem{Nickerson} R. S. Nickerson, Problem E1845, \textit{Amer. Math. Monthly}, \textbf{73} (1966), 81.
\bibitem{Nowakowski} R. J. Nowakowski, Generalizations of the Langford-Skolem problem, M.S. Thesis, Dept. Math., Univ. Calgary, 1975. available at \url{https://oeis.org/A104429/a104429.pdf}.
\bibitem{Sedlacek} J. Sedl\'{a}\v{c}ek, On a set system, Int. Conf. Comb. Math. 1970, \textit{Ann Acad. Sci.} \textbf{ 175} (1970), 329--330.
\bibitem{MP} D. Viertel, F. Niedermeyer, C. Hercher, et al., Das kaputte Osterei, \url{https://matheplanet.de/matheplanet/nuke/html/viewtopic.php?topic=246540&start=0}, 2023.
\end{thebibliography}
\end{document}